\newtheorem{theorem}{Theorem}
\newtheorem{corollary}{Corollary}
\newtheorem{proof}{Proof}
\newtheorem{example}{Example}
\begin{document}

\begin{center}\Huge
Darboux transformations and second order difference equations 
\end{center}
\begin{center}\large
Alina Dobrogowska${}^*$  and David J. Fern\'andez C.${}^\dagger$, 
\end{center}\begin{center}
${}^*$ Institute of Mathematics, University of Białystok,\\
 Ciołkowskiego 1M, 15-245 Białystok, Poland,\\[6pt]
${}^\dagger$ Physics Department, Cinvestav \\ 
AP 14-740, 07000 Mexico City, Mexico
\end{center}
\begin{center}
E-mail: alina.dobrogowska@uwb.edu.pl and david@fis.cinvestav.mx
\end{center}
\noindent {\bf Abstract.}
In this paper we implement the Darboux transformation, as well as an analogue of Crum's theorem, for a 
discrete version of Schr\"odinger equation. The technique is based on the use of first order operators 
intertwining two difference operators of second order. This method, which has been applied successfully 
for differential cases, leads also to interesting non trivial results in the discrete case. The 
technique allows us to construct the solutions for a wide class of difference Schr\"odinger equations. 
The exact solutions for some special potentials are also found explicitly.

\vskip0.5cm

\noindent {\bf Keywords:}
difference equations, discrete Schr\"odinger equation, discrete Darboux transformation, 
discrete Crum's theorem, factorization method, discrete intertwining

\section{Introduction}

The aim of this paper is to apply the factorization method to second order difference equations. We 
want to introduce an analogue of the Darboux transformation \cite{Dar} and Crum's theorem for discrete 
equations of Schr\"odinger type. Our method offers the possibility of finding new solutions for a class 
of discrete versions of Schr\"odinger equation. This article is an extension of previous works 
concerning the factorization method applied to second order differential and difference operators 
\cite{Schr, InHu, Mielnik-2,  18,    b1, bf13, B6,  B4, B5, GolOdz, B2}. Some results obtained in 
\cite{DoHo, DoJa,  MieNie} are used and adapted to our context. As general references related to these 
subjects, we recommend \cite{bf14,Dong-1,  fh99,ff05,fe10,Mielnik-3, 16}.

Specifically, we are going to show that for discrete systems the new potential is determined  either by 
one {\it shifted eigenfunction} of the initial Schr\"odinger operator for Darboux transformation 
or by $k$ such eigenfunctions in Crum's case. Our results differ from those recently obtained in 
\cite{ZPZ18}, since along this paper we assume that the initial and final operators necessarily 
have Schr\"odinger form, which is not the case in \cite{ZPZ18}. Note also that, if instead of 
shifted eigenfunctions we would use non shifted ones to implement the transformation, we would arrive to 
trivial results (see discussion at Section 3, after Theorem 1).

This paper is organized as follows. In section 2 we will present information about finite difference 
calculus and finite difference equations which is relevant for our problem. Also, we will make a brief 
survey on Darboux transformation and its iterations in the continuous case, the last ones leading to 
Crum's formulas for the potential and eigenfunctions of the final Schr\"odinger operator. In Section 3 
we discuss the general formalism for discrete Darboux transformation and we address some simple 
examples. Finally, in section 4 we explore, in general, the discrete Crum's transformation, and we 
illustrate the technique through a physically interesting example.  

\section{Preliminaries}

In this section, we recall some basic facts about difference calculus, difference equations, 
factorization method and higher-order supersymmetric quantum mechanics.

The techniques for solving differential equations based on numerical approximations were developed 
before programmable computers existed. One of the best known approaches is the Euler method, the 
simplest numerical algorithm for solving a first order differential equation. This method can be 
extended to other procedures, e.g. the Runge--Kutta methods. Starting from a differential equation, 
we replace the derivative $\psi'$ by its finite difference approximation
\begin{equation}
 \hspace*{-2cm} \psi'(t)\approx \frac{\psi(t+\Delta t)-\psi(t)}{\Delta t}=\frac{\psi(t_{n+1})-
\psi(t_{n})}{t_{n+1}-t_{n}}. 
 \end{equation}
If the step is equal to one the Taylor's theorem give us the following fundamental relation (see 
\cite{Boole})
\begin{equation}
\psi(t+1)-\psi(t)=\psi^{'}(t)+\dfrac{\psi^{''}(t)}{2!}+\dfrac{\psi^{'''}(t)}{3!}+\dots
\end{equation}

The above equation yields the standard definition of the forward difference and shift operators
 \begin{align}
 \label{1} &\bigtriangleup  \psi(n)=\psi(n+1)-\psi(n),\\
 & T^{\pm} \psi(n)= \psi(n\pm 1).\nonumber
 \end{align}
The product rule for the forward difference operator reads 
$\bigtriangleup \big(\psi\varphi \big)(n)=\psi(n)\bigtriangleup \varphi (n) +\varphi (n+1)
\bigtriangleup \psi(n)$.
The operators $T^{+}$, $\bigtriangleup$ and $\frac{d}{dx}$ are connected by the relations
\begin{equation}
T^{+}={\bf{1}}+\bigtriangleup=e^{\frac{d}{dx}}.
\end{equation}
So $\bigtriangleup$ is the fundamental operation in the calculus of finite differences. The second 
difference is given by
\begin{equation}
\bigtriangleup^2  \psi(n)=\psi(n+2)-2\psi(n+1)+\psi(n).
\end{equation}
A homogeneous linear second order difference equation can be written as follows 
\begin{equation}
\psi(n+2)+a(n)\psi(n+1)+b(n)\psi(n)=0,
\end{equation}
where $\{a\}$ and $\{b\}$ are sequences.
It is well known how to solve the above equation when the coefficients are constant (see e.g. 
\cite{El}). Let us denote by $\ell(\mathbb{Z};\mathbb{C})$ the set of complex-valued sequences. We want 
to apply the factorization method to the second order difference operator of Schr\"odinger type 
$H : \ell(\mathbb{Z};\mathbb{C}) \longrightarrow \ell(\mathbb{Z};\mathbb{C})$ given by
\begin{equation}
\label{ham}
H = -\bigtriangleup^2+V(n),
\end{equation}
where $\{V\}$ is a real-valued sequence. 

There are different approaches for discretizing the one-dimensional time-independent Schr\"odinger 
equation
\begin{equation}
\left(-\dfrac{d^2}{dx^2}+V(x)\right)\psi(x)=\lambda\psi(x).
\end{equation}
Very often a discretization appears in the standard central difference formula with the step $h$ for the 
second derivative
\begin{equation}
\left( -\dfrac{1}{h^2}\bigtriangleup \bigtriangledown+V(n)\right)\psi(n)=
\end{equation}
$$
-\dfrac{1}{h^2}\left(\psi(n+1)-2\psi(n)+\psi(n-1)\right)+V(n)\psi(n)=\lambda \psi(n),
$$
with the backward difference operator being defined by (see \cite{BoKl, BrKa, DoJa, DoHo}) 
\begin{equation}
\bigtriangledown \psi(n)=\psi(n)-\psi(n-1).
\end{equation}
The exact discretization of the Schr\"odinger equation was proposed in \cite{Tar} based on Fourier 
transforms. The study of some operators of type (\ref{ham}) starting from spectral data was done at 
\cite{Mau-Mir-1, Mau-Mir-2}. In addition, other special cases, as the factorization of Jacobi operators, 
were also investigated \cite{GeTe,  Te}.

On the other hand, in the continuous case supersymmetric quantum mechanics aims to construct a new 
Hamiltonian departing from an initial solvable one through what is called intertwining operator 
technique \cite{ff05,fe10}. In the simplest case involving first order intertwining operators, the key 
is to fulfill the following relations
\begin{equation}
H_1 A_1^+ = A_1^+ H_0 \quad \Leftrightarrow \quad H_0 A_1^- = A_1^- H_1,
\end{equation}
where
\begin{eqnarray}
&& H_i = - \frac{d^2}{dx^2} + V_i(x), \quad i=0,1, \\
&& A_1^\pm = \mp \frac{d}{dx} + f_1(x,\epsilon_1) .
\end{eqnarray}
It turns out that the superpotential $f_1(x,\epsilon_1)$ must satisfy the following Riccati equation 
associated to the factorization energy $\epsilon_1$:
\begin{eqnarray}\label{iRe}
&& f_1'(x,\epsilon_1) + f_1^2(x,\epsilon_1) = V_0(x) - \epsilon_1 .
\end{eqnarray}
If $f_1(x,\epsilon_1) = \psi_{0\,1}'(x)/\psi_{0\,1}(x)$, this equation is transformed into its 
equivalent Schr\"odinger equation:
\begin{eqnarray}\label{iSe}
- \psi_{0\,1}''(x) + V_0(x) \psi_{0\,1}(x) = \epsilon_1 \psi_{0\,1}(x) .
\end{eqnarray}
In $\psi_{0\,1}$ the first index labels the potential for the corresponding Hamiltonian while the second 
refers to the associated factorization energy. Moreover, whenever $f_1(x,\epsilon_1)$ or 
$\psi_{0\,1}(x)$ have been found, the final potential is determined by:
\begin{eqnarray}
&& V_1(x) = V_0(x) - 2 f_1'(x,\epsilon_1) = V_0(x) - 2 [\ln \psi_{0\,1}(x)]''. 
\end{eqnarray} 

This transformation can be iterated, by looking for a new Hamiltonian $H_2$ departing from $H_1$ as 
follows:
\begin{equation}
H_2 A_2^+ = A_2^+ H_1 \quad \Leftrightarrow \quad H_1 A_2^- = A_2^- H_2 ,
\end{equation}
where
\begin{eqnarray}
&& H_2 = - \frac{d^2}{dx^2} + V_2(x), \\
&& A_2^\pm = \mp \frac{d}{dx} + f_2(x,\epsilon_2).
\end{eqnarray}
Now we have to solve either the new Riccati equation, 
\begin{eqnarray}\label{nRe}
&& f_2'(x,\epsilon_2) + f_2^2(x,\epsilon_2) = V_1(x) - \epsilon_2,
\end{eqnarray}
or its equivalent Schr\"odinger equation, which appears by assuming that $f_2(x,\epsilon_2) = 
\psi_{1\,2}'(x)/\psi_{1\,2}(x)$:
\begin{eqnarray}\label{nSe}
&& - \psi_{1\,2}''(x) + V_1(x) \psi_{1\,2}(x) = \epsilon_2 \psi_{1\,2}(x).
\end{eqnarray}
Note that the solution $f_2(x,\epsilon_2)$ to equation (\ref{nRe}) can be found from two solutions 
$f_1(x,\epsilon_1), \ f_1(x,\epsilon_2)$ to the initial Riccati equation~(\ref{iRe}) for the 
factorization energies $\epsilon_1, \ \epsilon_2$ through the finite difference formula
\cite{fhm98,MieNie}:
\begin{eqnarray}
&& f_2(x,\epsilon_2) = - f_1(x,\epsilon_1) - 
\frac{\epsilon_1 - \epsilon_2}{f_1(x,\epsilon_1) - f_1(x,\epsilon_2)}.
\end{eqnarray}
Moreover, the solution $\psi_{1\,2}(x)$ to the Schr\"odinger equation (\ref{nSe}) is obtained by acting 
$A_1^+$ on the corresponding solution $\psi_{0\,2}(x)$ to the initial Schr\"odinger 
equation~(\ref{iSe}) associated to $\epsilon_2$, namely
\begin{eqnarray}
&& \psi_{1\,2}(x) = A_1^+ \psi_{0\,2} = - \frac{W(\psi_{0\,1},\psi_{0\,2})}{\psi_{0\,1}(x)},
\end{eqnarray}
where $W(\psi_{0\,1},\psi_{0\,2})$ is the Wronskian of the two seed solutions $\psi_{0\,1}(x),$ 
$\psi_{0\,2}(x)$.

By iterating $k$ times this procedure, a chain of intertwined Hamiltonians
\begin{equation}
H_{i+1} A_{i+1}^+ = A_{i+1}^+ H_i \quad \Leftrightarrow  \quad 
H_{i} A_{i+1}^- = A_{i+1}^- H_{i+1}, \quad i=0,1,\dots,k-1,
\end{equation}
is now constructed, where
\begin{eqnarray}
&& H_{i} = - \frac{d^2}{dx^2} + V_{i}(x), \\
&& A_{i+1}^\pm = \mp \frac{d}{dx} + f_{i+1}(x,\epsilon_{i+1}).
\end{eqnarray}
The superpotential $f_{i+1}(x,\epsilon_{i+1})$ must satisfy the Riccati equation
\begin{eqnarray}\label{kRe}
&& f_{i+1}'(x,\epsilon_{i+1}) + f_{i+1}^2(x,\epsilon_{i+1}) = V_i(x) - \epsilon_{i+1},
\end{eqnarray}
which is equivalent to the Schr\"odinger equation appearing by substituting 
$f_{i+1}(x,\epsilon_{i+1}) = \psi_{i\,i+1}'(x)/\psi_{i\,i+1}(x)$:
\begin{eqnarray}\label{kSe}
&& - \psi_{i\,i+1}''(x) + V_i(x) \psi_{i\,i+1}(x) = \epsilon_{i+1} \psi_{i\,i+1}(x).
\end{eqnarray}
Once again, $f_{i+1}(x,\epsilon_{i+1})$ is determined from two solutions
$f_{i}(x,\epsilon_{i})$, $f_{i}(x,\epsilon_{i+1})$ of the $i$th Riccati 
equation as follows:
\begin{eqnarray}
&& f_{i+1}(x,\epsilon_{i+1}) = - f_i(x,\epsilon_i) - 
\frac{\epsilon_i - \epsilon_{i+1}}{f_i(x,\epsilon_i) - f_i(x,\epsilon_{i+1})}.
\end{eqnarray}
Moreover, the Schr\"odinger solution $\psi_{i\,i+1}$ is obtained by acting $A_i^+$ on $\psi_{i-1\,i+1}$:
\begin{eqnarray}
&& \psi_{i\,i+1} = A_i^+ \psi_{i-1\,i+1} = - \frac{W(\psi_{i-1\,i},\psi_{i-1\,i+1})}{\psi_{i-1\,i}(x)}.
\end{eqnarray}

Let us note that, when iterating the last two formulas for decreasing indexes in order to generate a 
final potential $V_k(x)$ from the initial one $V_0(x)$, at the end we require to know $k$ solutions 
$f_1(x,\epsilon_{i}), \ i=1,\dots,k$ to the initial Riccati equation (\ref{iRe}). The same applies for 
the $k$ seed solutions $\psi_{0\,i}, i=1,\dots,k$, of the initial Schr\"odinger equation. In particular, 
the final potential $V_k(x)$ expressed in terms of these $k$ seed solutions is simply
\begin{eqnarray}
&& V_k(x) = V_0(x) - 2 [\ln W(\psi_{0\,1}, \dots, \psi_{0\,k})]'' ,
\end{eqnarray}
where $W(\psi_{0\,1}, \dots, \psi_{0\,k})$ denotes de Wronskian of $\psi_{0\,i}(x), \, i=1,\dots,k$.

\section{Discrete Darboux transformations}

In this section we introduce the Darboux transformation for a discrete version of the one-dimensional 
Schr\"odinger equation. At the end of this chapter we also present as an example the free particle case. 
We show how to build explicitly the new potential (a discrete analogue of the completely transparent 
potential) using the methods outlined in this section.

Let $ H_0$ and $ H_1$ be the following two discrete versions of the Schr\"odinger operator 
(Hamiltonian)
\begin{align}
\label{2} &  H_0= -\bigtriangleup^2 +V_0(n),\\
\label{3} &  H_1= -\bigtriangleup^2 +V_1(n),
\end{align}
where $\{V_0\}$ and $\{V_1\}$ are real-valued sequences. It is well known the correspondence between the 
one-dimensional discrete Schr\"odinger equation 
\begin{equation}\label{4}
\left( -\bigtriangleup^2 +V_0(n)\right )\psi(n) =\lambda\psi(n)
\end{equation}
with the matrix difference equation
\begin{equation}\label{5}
\bigtriangleup \left( \begin{array}{c}
\psi(n)\\
\bigtriangleup \psi(n)
\end{array}\right)= 
\left( \begin{array}{cc}
0 & 1\\
V_0(n)-\lambda & 0
\end{array}\right)   \left( \begin{array}{c}
\psi(n)\\
\bigtriangleup \psi(n)
\end{array}\right),
\end{equation}
and with the discrete Riccati equation
\begin{equation}\label{6}
\bigtriangleup f(n)+f(n)f(n+1)=V_0(n)-\lambda,
\end{equation}
whose solutions are related by
\begin{equation}\label{7}
f(n)=\dfrac{\bigtriangleup \psi(n)}{\psi(n)},
\end{equation}
where $\{\psi\}$ is a sequence and $\lambda$ is a constant. Moreover, let us suppose the existence of a 
first-order difference operator of the form
\begin{equation}
\label{8}
 \left( A^{(n)}_1\right)^+=-\bigtriangleup +f_1(n).
\end{equation}

We will implement next the first order Darboux transformation for the discrete Schr\"odinger equation 
through the following theorem.
\begin{theorem}
Let $ H_0$, $ H_1$ and $ \left( A^{(n)}_1\right)^+$ be the operators defined by (\ref{2}), (\ref{3}) and 
(\ref{8}). If 
\begin{equation}
\label{9}
 H_1  \left( A^{(n)}_1\right)^+= \left( A^{(n+2)}_1\right)^+  H_0,
\end{equation}
then the sequences $\{V_0\}$, $\{V_1\}$, $\{f_1\}$ satisfy
\begin{align}
\label{10} & V_1(n)=V_0(n+1)-2 \bigtriangleup f_1(n+1),\\
\label{11} & -\bigtriangleup^2 f_1(n)+\bigtriangleup V_0(n)-2f_1(n)\bigtriangleup f_1(n+1)=\\
& -f_1(n)V_0(n+1)+f_1(n+2)V_0(n).\nonumber
\end{align} 
We will say that the operator $\left( A^{(n)}_1\right)^+$ intertwines the two Hamiltonians $H_0$ and 
$H_1$.
\end{theorem}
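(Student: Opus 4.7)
The plan is to test the operator identity (\ref{9}) on an arbitrary sequence $\psi \in \ell(\mathbb{Z};\mathbb{C})$, expand both sides in the basis $\{\psi(n),\psi(n+1),\psi(n+2),\psi(n+3)\}$, and equate coefficients term by term. Since $H_1$ is second-order in the shift operator and $\left(A_1^{(n)}\right)^+$ is first-order, each side of (\ref{9}) produces a linear combination of exactly these four shifted values of $\psi$ with coefficients built from $V_0,V_1,f_1$ (and their shifts). Because $\psi$ is arbitrary, the identity reduces to four scalar equations indexed by $n$; I expect two of these to be automatic and the remaining two to be precisely (\ref{10}) and (\ref{11}).

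Concretely, I would first introduce $\phi(n):=\left(A_1^{(n)}\right)^+\psi(n)=-\psi(n+1)+(1+f_1(n))\psi(n)$ and compute
\[
H_1\phi(n)=-\phi(n+2)+2\phi(n+1)-\phi(n)+V_1(n)\phi(n),
\]
collecting the result as a sum $\sum_{j=0}^{3}\alpha_j(n)\psi(n+j)$. In parallel I would set $\chi(n):=H_0\psi(n)=-\psi(n+2)+2\psi(n+1)+(V_0(n)-1)\psi(n)$ and expand $\left(A_1^{(n+2)}\right)^+\chi(n)=-\chi(n+1)+(1+f_1(n+2))\chi(n)$ as $\sum_{j=0}^{3}\beta_j(n)\psi(n+j)$. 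A direct check shows $\alpha_3=\beta_3=1$ and $\alpha_2=\beta_2=-(3+f_1(n+2))$, so these coefficients match automatically.

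The equation $\alpha_1=\beta_1$ reads $3+2f_1(n+1)-V_1(n)=3+2f_1(n+2)-V_0(n+1)$, which after writing $f_1(n+2)-f_1(n+1)=\bigtriangleup f_1(n+1)$ is exactly (\ref{10}). The equation $\alpha_0=\beta_0$ reads $(V_1(n)-1)(1+f_1(n))=(V_0(n)-1)(1+f_1(n+2))$; substituting $V_1(n)=V_0(n+1)-2\bigtriangleup f_1(n+1)$ from (\ref{10}), expanding, and recognising the combination $-2\bigtriangleup f_1(n+1)+f_1(n+2)-f_1(n)$ as $-\bigtriangleup^2 f_1(n)$ produces (\ref{11}).

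The only real obstacle is bookkeeping: the shifts in $\phi(n+1)$ and $\phi(n+2)$ bring in $f_1(n+1),f_1(n+2)$ while the shift in $\chi(n+1)$ brings in $V_0(n+1)$, so one must align the indices carefully before identifying coefficients. Once the four coefficient equations are written out in the common basis $\{\psi(n+j)\}$, the reduction to (\ref{10}) and (\ref{11}) is a short algebraic simplification, and the final step of recognising $-\bigtriangleup^2 f_1(n)$ inside the $\psi(n)$-coefficient equation is what makes (\ref{11}) appear in its stated form.
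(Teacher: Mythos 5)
Your proof is correct and is essentially the paper's own argument: expanding the identity in powers of the shift operator $T^{+}$ (as the paper does) is the same bookkeeping as acting on an arbitrary $\psi$ and matching the coefficients of $\psi(n+j)$, with the $(T^{+})^3$ and $(T^{+})^2$ terms cancelling automatically, the $T^{+}$ term giving (\ref{10}), and the identity term giving (\ref{11}) after eliminating $V_1(n)$ via (\ref{10}) — a substitution the paper leaves implicit but you carry out explicitly.
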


\begin{proof}
We can write equality (\ref{9}) in the form
\begin{equation}
\left(-\bigtriangleup^2 +V_1(n) \right)\left( -\bigtriangleup +f_1(n)\right)=
\left(-\bigtriangleup +f_1(n+2)\right)\left(-\bigtriangleup^2 +V_0(n) \right).\nonumber
\end{equation}
Simple calculations using equation (\ref{1}) yield
\begin{equation}
\left( -(T^{+})^2+2  T^{+}-1+V_1(n)\right) \left(- T^{+}+1+f_1(n)\right)=\nonumber
\end{equation}
\begin{equation}
=  \left(- T^{+}+1+f_1(n+2)\right) \left( - (T^{+})^2+2  T^{+}-1+V_0(n)\right), \nonumber
\end{equation}
\begin{equation}
\left( -V_1(n)+V_0(n+1)+2f_1(n+1)-2f_1(n+2) \right) T^{+} +\nonumber
\end{equation}
\begin{equation}
f_1(n+2)-f_1(n)+V_1(n)\left( 1+f_1(n)\right)-V_0(n)\left(1+f_1(n+2)\right)= 0. \nonumber
\end{equation}
By collecting the coefficients of $ T^{+}$ and the identity operator, which must vanish independently of 
each other, we obtain equations (\ref{10}) and (\ref{11}) respectively. This finishes the proof.
\end{proof}

Let us stress the importance that the apparently odd equation (\ref{9}) has in our treatment for generating 
non-trivial new potentials $V_1(n)$. In fact, if instead of Eq.~(\ref{9}) we would ask that  
$ H_1  \left( A^{(n)}_1\right)^+= \left( A^{(n)}_1\right)^+  H_0$, 
then we would obtain just the trivial result $V_1(n) = V_0(n+1) = {\rm constant}$.

\begin{theorem}
If 
\begin{equation}\label{12a}
f_1(n)=\frac{\bigtriangleup \psi_1(n)}{\psi_1(n)},
\end{equation} where
\begin{equation}\label{12}
\quad \left( -\bigtriangleup^2 +V_0(n)\right )\psi_1(n) =\epsilon \psi_1(n+2),\quad \epsilon \in\mathbb{R},
\end{equation}
then the condition (\ref{11}) is fulfilled.
\end{theorem}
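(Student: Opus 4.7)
The plan is to substitute the hypothesis into (\ref{11}) and reduce the claim to a formal algebraic identity. The key first move is to rewrite the condition $f_1(n) = \bigtriangleup \psi_1(n)/\psi_1(n)$ in multiplicative form: setting $u(n) := 1 + f_1(n) = \psi_1(n+1)/\psi_1(n)$, one has $\psi_1(n+2)/\psi_1(n) = u(n)\,u(n+1)$. Dividing equation (\ref{12}) through by $\psi_1(n)$ then yields the explicit expression
\begin{equation*}
V_0(n) = (1+\epsilon)\,u(n)\,u(n+1) - 2 u(n) + 1.
\end{equation*}
This is all the information one needs to extract from the seed sequence; everything else will be algebra in the three variables $u(n), u(n+1), u(n+2)$ with $\epsilon$ as a parameter.

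Next I would recast (\ref{11}) into a form convenient for substitution. Expanding the finite differences $\bigtriangleup^2 f_1(n) = f_1(n+2) - 2 f_1(n+1) + f_1(n)$, $\bigtriangleup V_0(n) = V_0(n+1) - V_0(n)$ and $\bigtriangleup f_1(n+1) = f_1(n+2) - f_1(n+1)$, regrouping the $V_0$-terms onto the left, and replacing $f_1 = u - 1$ throughout, equation (\ref{11}) becomes
\begin{equation*}
V_0(n+1)\,u(n) - V_0(n)\,u(n+2) = u(n) - u(n+2) + 2 u(n)\bigl(u(n+2) - u(n+1)\bigr).
\end{equation*}

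The final step is direct verification. Plugging the formula for $V_0(n)$ (and its $n\mapsto n+1$ shift for $V_0(n+1)$) into the left-hand side, the cubic term $(1+\epsilon)\,u(n)\,u(n+1)\,u(n+2)$ appears twice with opposite signs and cancels; the surviving terms on both sides collapse to $u(n) - u(n+2) - 2 u(n) u(n+1) + 2 u(n) u(n+2)$, and the identity follows.

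The only real obstacle is clerical bookkeeping of shifts. Conceptually, a single mechanism is at work: the cancellation of the $\epsilon$-dependent cubic term is enforced by the multiplicative telescoping $\psi_1(n+2)/\psi_1(n) = u(n)\,u(n+1)$. This is the discrete counterpart of the fact that, in the continuous setting, the logarithmic-derivative substitution $f = \psi'/\psi$ is precisely what converts the Riccati equation~(\ref{iRe}) into the Schr\"odinger equation~(\ref{iSe}) — here the analogous role is played by the ansatz $u(n) = \psi_1(n+1)/\psi_1(n)$, with the shift $\psi_1(n) \to \psi_1(n+2)$ on the right-hand side of~(\ref{12}) chosen so that the factorization of $V_0$ matches the structure of~(\ref{11}) term-for-term.
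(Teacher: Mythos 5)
Your argument is correct and is essentially the paper's proof in a different packaging: both substitute the discrete logarithmic derivative (\ref{12a}) into (\ref{11}) and verify the result by direct algebra, your variable $u(n)=1+f_1(n)=\psi_1(n+1)/\psi_1(n)$ and the formula $V_0(n)=(1+\epsilon)\,u(n)u(n+1)-2u(n)+1$ being exactly the paper's later expression (\ref{V0ne0}). The only cosmetic difference is that the paper clears denominators in $\psi_1$ and factors the substituted (\ref{11}) as $\psi_1(n+3)$ times a copy of (\ref{12}) at $n$ minus $\psi_1(n+2)$ times the same expression at $n+1$, whereas you first eliminate $V_0$ via (\ref{12}) and then check the resulting identity in $u(n),u(n+1),u(n+2)$, where the $\epsilon$-dependent cubic term cancels.
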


\begin{proof}
Through the substitution $f_1(n)=\frac{\bigtriangleup \psi_1(n)}{\psi_1(n)}$, the requirement (\ref{11}) 
is equivalent to the equation
\begin{equation}
\dfrac{\psi_1(n+3)}{\psi_1(n+2)}-\dfrac{\psi_1(n+1)}{\psi_1(n)}+2\dfrac{\psi_1(n+2)}{\psi_1(n)}-2\dfrac{\psi_1(n+1)\psi_1(n+3)}{\psi_1(n)\psi_1(n+2)}+ \nonumber
\end{equation}
\begin{equation}
V_0(n+1)\dfrac{\psi_1(n+1)}{\psi_1(n)}-V_0(n)\dfrac{\psi_1(n+3)}{\psi_1(n+2)}=0.\nonumber
\end{equation}
It can  be rewritten now in the form
\begin{equation}
\psi_1(n+3)\bigg( \psi_1(n+2)-2 \psi_1(n+1)+ \psi_1(n)-V_0(n) \psi_1(n)\bigg)- \nonumber
\end{equation}
\begin{equation}
\psi_1(n+2)\bigg( \psi_1(n+3)-2 \psi_1(n+2)+ \psi_1(n+1)-V_0(n+1) \psi_1(n+1)\bigg)=0.\nonumber
\end{equation}
Finally, we obtain equation (\ref{12}) for the sequence $\{\psi_1\}$,
\begin{equation}
\psi_1(n+2)-2 \psi_1(n+1)+ \psi_1(n)-V_0(n) \psi_1(n)=-\epsilon \psi_1(n+2).\nonumber
\end{equation}
\end{proof}

\begin{corollary}
\label{Proposition 3}
Let $\{\varphi_0\}$ be a solution of 
\begin{equation}\label{13}
H_0\varphi_0(n)=\bigg(-\bigtriangleup^2 +V_0(n)\bigg)\varphi_0(n)=0.
\end{equation} Then $\varphi_1(n)= \left( A^{(n)}_1\right)^+\varphi_0(n)$ is a solution of 
\begin{equation}\label{14}
H_1\varphi_1(n)=\bigg(-\bigtriangleup^2 +V_1(n)\bigg)\varphi_1(n) =0.
\end{equation}
\end{corollary}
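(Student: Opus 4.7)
The plan is to derive this directly from the intertwining relation (\ref{9}) established in Theorem 1, treating it as an operator identity and applying both sides to the sequence $\{\varphi_0\}$.

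First I would recall that by hypothesis $\varphi_0$ is a solution of (\ref{13}), so $H_0\varphi_0(n) = 0$ for every $n\in\mathbb{Z}$, i.e.\ $H_0\varphi_0$ is the zero sequence. Next I would apply the intertwining identity (\ref{9}) to $\varphi_0$:
\begin{equation}
H_1\bigl(A^{(n)}_1\bigr)^+\varphi_0(n) = \bigl(A^{(n+2)}_1\bigr)^+ H_0\varphi_0(n).\nonumber
\end{equation}
Since $\bigl(A^{(n+2)}_1\bigr)^+ = -\bigtriangleup + f_1(n+2)$ is a linear first-order difference operator, it sends the zero sequence to the zero sequence. Hence the right-hand side vanishes, yielding $H_1\bigl[\bigl(A^{(n)}_1\bigr)^+\varphi_0\bigr](n) = 0$, which is exactly (\ref{14}) with $\varphi_1(n) = \bigl(A^{(n)}_1\bigr)^+\varphi_0(n)$.

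There is essentially no obstacle here beyond verifying that the hypothesis of Theorem 1 is available: the intertwining (\ref{9}) is equivalent to relations (\ref{10})--(\ref{11}) for the coefficients, and we are implicitly assuming that $\{V_0\}, \{V_1\}, \{f_1\}$ are chosen so that these hold (for instance, by building $f_1$ from a shifted eigenfunction as in Theorem 2 and defining $V_1$ through (\ref{10})). Given that, the corollary is an immediate one-line consequence of the intertwining, with no case analysis or auxiliary computation required.
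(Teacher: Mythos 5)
Your argument is correct and is essentially the paper's own proof: the authors also dispose of the corollary by observing it is a direct consequence of the intertwining relation (\ref{9}) applied to $\varphi_0$ with $H_0\varphi_0=0$. Your version merely spells out the same one-line deduction (linearity of $\bigl(A^{(n+2)}_1\bigr)^+$ and the vanishing of the right-hand side) in more detail.
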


\begin{proof}
This is a direct consequence of formula (\ref{9}).
\end{proof}

Let us note that if we take $\epsilon =0$ and $f_1(n) = \bigtriangleup \varphi_0(n)/\varphi_0(n)$, where 
$\varphi_0(n)$ satisfies equation (\ref{13}), then equations (\ref{10}), (\ref{12}) guarantee that $H_0$ 
and $H_1$ become factorized in the form
\begin{align}
  &\label{H_0}  H_0=\bigg(\bigtriangleup +f_1(n+1)\bigg)\bigg(-\bigtriangleup +f_1(n)\bigg),\\
  &\label{H_1} H_1=\bigg(-\bigtriangleup +f_1(n+2)\bigg)\bigg(\bigtriangleup +f_1(n+1)\bigg),
\end{align}
i.e. the potentials can be written in the form
\begin{align}
&\label{V_0} V_0(n)= \bigtriangleup f_1(n)+f_1(n)f_1(n+1), \\
&\label{V_1} V_1(n)= -\bigtriangleup f_1(n+1)+f_1(n+1)f_1(n+2).
\end{align}
In addition, from equations (\ref{12a}) and (\ref{12}) we can see that for solutions $\psi_1(n)$ 
satisfying equation (\ref{12}) with $\epsilon \neq 0$, the potential $V_0$ can be also expressed as
\begin{equation} \label{V0ne0}
V_0(n)= (1+ \epsilon )\big( \bigtriangleup f_1(n)+f_1(n)f_1(n+1) \big)+\epsilon  \big( 2 f_1(n)+1 \big).
\end{equation} 
Note that the $f_1(n)$ of equations (\ref{V_0}) and (\ref{V0ne0}) are not the same. Using now the expression
\begin{equation}
f_1(n)=\dfrac{w(n+1)}{(1+\epsilon) w(n)}-1, \quad \epsilon \neq -1
\end{equation}
equation (\ref{V0ne0}) transforms into
\begin{equation}\label{12-n}
\quad \bigg( -\bigtriangleup^2 +(1+\epsilon)V_0(n)\bigg)w(n) =\epsilon w(n),
\end{equation}
which means that the change of variables $\psi_1(n)=\frac{C}{(1+\epsilon)^n}w(n)$ transforms equation 
(\ref{12}) into (\ref{12-n}), where $C$ is a constant and $\epsilon\neq -1$. The case when $\epsilon=-1$ 
is not very interesting because then $V_1(n)=V_0(n+2)$.

\begin{example}
{\bf Completely transparent potential for the discrete Schr\"o\-dinger equation.}
Let us consider the case when $V_0(n)=0$. In this example the general solution of the homogeneous linear 
second-order difference equations with constant coefficients (\ref{13}),
\begin{equation}
-\bigtriangleup^2\varphi_0(n)=0,
\end{equation}
 is given by
\begin{equation}
\label{17}
\varphi_0(n)=C_1+C_2n,
\end{equation}
where $C_1$ and $C_2$ are constants. Next, we will look for some particular solutions of equation 
(\ref{12}), i.e.
\begin{equation}
\label{18}
-\bigtriangleup^2 \psi_1(n)=\epsilon \psi_1(n+2).
\end{equation}
After substituting $\psi_1(n)=\lambda^n$ we obtain the characteristic equation of this difference equation
\begin{equation}
\label{19}
-(1+\epsilon )\lambda^2+2\lambda-1=0.
\end{equation}
The set of solutions of this equation depends on the character of the roots of the characteristic equation 
$\Delta= -4\epsilon$ as follows. 
\begin{enumerate}[a)]
\item Single real root. If $\epsilon =0$, then $\Delta=0$ and we can choose 
\begin{equation}
\label{20}
\psi_1(n)=n+1.
\end{equation}
From equations (\ref{12a}) and (\ref{10}) we thus find
\begin{align}
\label{21}
& f_1(n)=\dfrac{1}{n+1},\\
& V_1(n)=\dfrac{2}{(n+2)(n+3)}.
\end{align}
Moreover, from Corollary  \ref{Proposition 3} we obtain that
\begin{equation}
\label{22}
\varphi_1(n)=\dfrac{C_1-C_2}{n+1}
\end{equation}
is a solution of the following equation 
\begin{equation}
\label{23}
H_1 \varphi_1(n)=\bigg(-\Delta^2 + \dfrac{2}{(n+2)(n+3)}\bigg) \varphi_1(n)=0.
\end{equation}

\item Distinct real roots. If $\epsilon <0$ (we assume $\epsilon =-\kappa^2$ and $\kappa \neq -1,1$), 
then $\Delta>0$ and we can choose 
\begin{equation}
\label{24}
\psi_1(n)=\dfrac{1}{(1+\kappa)^{n}}+\dfrac{1}{(1-\kappa)^{n}}.
\end{equation}
From equations (\ref{12a}) and (\ref{10}) we thus find
\begin{align}
\label{25}
& f_1(n)=\dfrac{\kappa}{1-\kappa^2}\dfrac{(1+\kappa)^{n+1}-(1-\kappa)^{n+1}}{(1+\kappa)^{n}+(1-\kappa)^{n}},\\
& V_1(n)=\dfrac{-8\kappa^2 (1-\kappa^2)^{n}}
{\big((1+\kappa)^{n+1}+(1-\kappa)^{n+1}\big)\big( (1+\kappa)^{n+2}+(1-\kappa)^{n+2} \big)}.
\end{align}
Moreover, from Corollary \ref{Proposition 3} we obtain that
\begin{equation}
\label{26}
\varphi_1(x)=- C_2+
\dfrac{\kappa (C_1+C_2n)\big(  (1+\kappa)^{n+1}-(1-\kappa)^{n+1}\big)}
{\big( 1-\kappa^2\big)\big( (1+\kappa)^{n}+(1-\kappa)^{n} \big)}
\end{equation}
is a solution of the following equation 
\begin{equation}
\label{27}
\bigg(-\Delta^2 - \dfrac{8\kappa^2 (1-\kappa^2)^{n}}
{\big((1+\kappa)^{n+1}+(1-\kappa)^{n+1}\big)\big( (1+\kappa)^{n+2}+(1-\kappa)^{n+2} \big)}\bigg) \varphi_1(n)=0.
\end{equation}

Note that when $\kappa = -1$ or $\kappa = 1$ we obtain that equation (\ref{18}) reduces to
\begin{equation}
\label{28}
2 \psi_1(n+1)= \psi_1(n).
\end{equation}
This leads to $\psi_1(n)=\frac{B}{2^n}$, where $B$ is a constant, which implies that the new potential 
does not change, $V_1(n)=0$.

\item Complex roots. If $\epsilon >0$ (we assume $\epsilon =\mu^2$), then $\Delta<0$ and we can choose 
\begin{equation}
\label{29}
\psi_1(n)=r^n\cos n\theta  ,
\end{equation}
where $r=\frac{1}{\sqrt{1+\mu^2}}$ and $\tan\theta=\mu$. From equations (\ref{12a}) and (\ref{10}) we find
\begin{align}
\label{30}
& f_1(n)=\dfrac{1}{\sqrt{1+\mu^2}} \dfrac{\cos (n+1)\theta }{\cos n\theta }-1,\\
& V_1(n)=-\dfrac{2}{\sqrt{1+\mu^2}} \dfrac{\cos (n+1)\theta \cos (n+3)\theta -\cos^2 (n+2)\theta }{\cos (n+1)\theta  \cos (n+2)\theta}.
\end{align}
Moreover, from Corollary \ref{Proposition 3} we obtain that
\begin{equation}
\label{31}
\varphi_1(x)=-C_2 n-(C_1+C_2)+\dfrac{1}{\sqrt{1+\mu^2}} \dfrac{(C_1+C_2 n)\cos (n+1)\theta }{\cos n\theta }
\end{equation}
is a solution of the equation 
\begin{equation}
\label{32}
\bigg(-\Delta^2 - \dfrac{2}{\sqrt{1+\mu^2}} \dfrac{\cos (n+1)\theta \cos (n+3)\theta -\cos^2 (n+2)\theta }{\cos (n+1)\theta  \cos (n+2)\theta} \bigg) \varphi_1(n)=0.
\end{equation}

\end{enumerate}
\end{example}

\begin{example} {\bf Discrete version of the harmonic oscillator.}
In the case when it is taken $f_1(n)=n$ in equation (\ref{V_0}) we obtain that $V_0(n)=n^2+n+1$. Thus, 
our next natural step is to look for the solution of equation (\ref{13}) with such a $V_0(n)$, i.e.
\begin{equation}
\label{osc}
\bigg(-\bigtriangleup^2+n^2+n+1\bigg)\varphi_0(n)=0.
\end{equation} 
The right-hand side of equation (\ref{H_0}) suggests us to use the following simpler equation
\begin{equation}
\bigg(-\bigtriangleup+n\bigg)\varphi_0(n)=0.
\end{equation}
A straightforward computation yields $\varphi_0(n)=C n!$, where $C$ is a constant. This is also a 
particular solution of equation (\ref{osc}). As in the continuous case \cite{Mielnik-2,fe84}, now we 
look for the general solution of the discrete Riccati equation (\ref{V_0}) using the transformation
\begin{equation}
\tilde{f_1}(n)=f_1(n)+\dfrac{1}{u(n)},
\end{equation}
where $u(n)$ must satisfy the first order difference equation
\begin{equation}
\bigtriangleup u(n)=f_1(n)u(n)+f_1(n+1)u(n+1)+1,
\end{equation}
\begin{equation}
\label{u(n)}
-n u(n+1)=(n+1)u(n)+1.
\end{equation}
A straightforward calculation shows that the solution of equation (\ref{u(n)}) becomes
\begin{equation}
u(n)=(-1)^n n\bigg( C_1+\sum_{i=1}^{n-1}\dfrac{(-1)^i}{i(i+1)} \bigg),
\end{equation}
where $C_1$ is a constant. The above expression leads to the general solution of the Riccati equation 
(\ref{V_0}) we were looking for:
\begin{equation}
\tilde{f_1}(n)=n+\dfrac{(-1)^n}{C_1 n+(-1)^n +2n\sum_{i=3}^{n-1}\frac{(-1)^i}{i} }.
\end{equation}
Finally, the new potential is produced by using equation (\ref{10}):
\begin{equation}
V_1(n)=n^2+3n+1
\end{equation}
$$
+2\dfrac{(-1)^n C_1(2n+3)+(-1)^n (2n+3) \sum_{i=1}^{n}\dfrac{(-1)^i}{i(i+1)}+\frac{1}{n+1}}
{(n+1)(n+2)\bigg( C_1+\sum_{i=1}^{n}\dfrac{(-1)^i}{i(i+1)} \bigg)\bigg( C_1+\sum_{i=1}^{n+1}\dfrac{(-1)^i}{i(i+1)} \bigg)}.
$$

\end{example}

\section{Discrete Crum's theorem}

In this section we introduce an analogue of the Crum's transformation for the discrete one-dimensional 
Schr\"odinger equation. At the end of this section we also present an example related to the free 
particle case. 
  
Let us note first of all that if we take a solution $\psi_i$ of equation (\ref{12}) for  a constant 
$\epsilon _i$ and apply to it the operator sequence (\ref{9}), we conclude that
\begin{equation}
\label{41}
 H_1 \bigg(-\bigtriangleup +f_1(n)\bigg)\psi_i(n)=\bigg(-\bigtriangleup +f_1(n+2)\bigg)  H_0 \psi_i(n)
\end{equation}
$$
=\epsilon_i \bigg(-\bigtriangleup +f_1(n+2)\bigg) \psi_i(n+2).
$$
Thus, the sequence 
\begin{equation}
\label{42}
\tilde{\psi}_i(n)= \big(-\bigtriangleup +f_1(n)\big)\psi_i(n)=
\dfrac{\psi_i(n)\bigtriangleup \psi_1(n)-\psi_1(n)\bigtriangleup \psi_i(n)}{\psi_1(n)}
\end{equation}
$$
=\dfrac{\psi_1(n+1) \psi_i(n) -\psi_1(n) \psi_i(n+1)}{\psi_1(n)}
$$$$
=-\dfrac{\left|
\begin{array}{cc}
\psi_1(n) & \psi_i(n)\\
\bigtriangleup \psi_1(n) & \bigtriangleup \psi_i(n)
\end{array}\right| }{\psi_1(n)}
=-\dfrac{\left|
\begin{array}{cc}
\psi_1(n) & \psi_i(n)\\
 \psi_1(n+1) & \psi_i(n+1)
\end{array}\right| }{\psi_1(n)}
$$
satisfies an equation with the same form as equation (\ref{12}) but for the new potential $V_1$,
\begin{equation}
\left( -\bigtriangleup^2 +V_1(n)\right )\tilde{\psi}_i(n) =\epsilon_i \tilde{\psi}_i(n+2).
\end{equation}  

Let us introduce now a well-know notation in theory of difference equations. The Casortian of the 
solutions  $\psi_1(n), \psi_2(n),\dots, \psi_k(n)$ is defined by
\begin{equation}
\!\!C(\psi_1, \dots, \psi_k)(n)\!\!:=\!\!\left|\!\!
\begin{array}{cccc}
\psi_1(n) & \psi_2(n) & \dots & \psi_k(n)\\
\psi_1(n+1) & \psi_2(n+1) & \dots & \psi_k(n+1)\\
\vdots & \vdots & \ldots & \dots\\
\psi_1(n+k-1) \!\!& \psi_2(n+k-1) \!\!& \dots \!\!& \psi_k(n+k-1)\\
\end{array}\!\!\right|\!,
\end{equation} 
see e.g. \cite{GaiMat}.
So, we can write formula (\ref{42}) in the form 
\begin{equation}
\tilde{\psi}_i(n)=-\dfrac{C(\psi_1, \psi_i)(n)}{C(\psi_1)(n)}.
\end{equation}    
  
Now, let us apply iteratively the technique from the previous section. We consider the new intertwining 
relation
\begin{equation}
 H_2 \bigg(-\bigtriangleup +f_2(n)\bigg)=\bigg(-\bigtriangleup +f_2(n+2)\bigg)  H_1 ,
\end{equation}
which leads to equations similar to (\ref{10}), (\ref{11}) (or (\ref{12}))
\begin{align}
\label{45} & V_2(n)=V_1(n+1)-2 \bigtriangleup f_2(n+1),\\
\label{46} & -\bigtriangleup^2 f_2(n)+\bigtriangleup V_1(n)-2f_2(n)\bigtriangleup f_2(n+1)=\\
& -f_2(n)V_1(n+1)+f_2(n+2)V_1(n).\nonumber
\end{align} 
If we choose $\psi_2$ as a solution of equation (\ref{12}), then  from expression (\ref{42}) we obtain
\begin{equation}
f_2(n)=\dfrac{\bigtriangleup \tilde{\psi}_2(n)}{\tilde{\psi}_2(n)}
=\dfrac{\bigtriangleup \big(-\bigtriangleup +f_1(n)\big)\psi_2(n)}{\big(-\bigtriangleup +f_1(n)\big)\psi_2(n)}
\end{equation}
$$
=\dfrac{\psi_1(n)}{\psi_1(n+1)}\dfrac{\psi_1(n+2)\psi_2(n+1)-\psi_1(n+1)\psi_2(n+2)}{\psi_1(n+1)\psi_2(n)-\psi_1(n)\psi_2(n+1)}-1
$$$$
=\dfrac{C(\psi_1)(n)}{C(\psi_1)(n+1)}\dfrac{C(\psi_1, \psi_2)(n+1)}{C(\psi_1, \psi_2)(n)}-1.
$$
From this result and equation (\ref{10}) the new potential is found,
\begin{equation}\label{87}
V_2(n)=V_1(n+1)-2 \bigtriangleup f_2(n+1)=V_0(n+2)-2 \bigtriangleup\big( f_1(n+2) + f_2(n+1)\big)
\end{equation}
$$
=V_0(n+2)-2 \bigtriangleup\bigg( \dfrac{\psi_1(n+3)\psi_2(n+1)-\psi_1(n+1)\psi_2(n+3)}{\psi_1(n+2)\psi_2(n+1)-\psi_1(n+1)\psi_2(n+2)}-2\bigg)
$$ $$
=V_0(n+2)-2 \bigtriangleup
\dfrac{\left|
\begin{array}{cc}
\psi_1(n+1) & \psi_2(n+1)\\
 \psi_1(n+3) &  \psi_2(n+3)
\end{array}\right| }
{C(\psi_1,\psi_2)(n+1) } .
$$
It is easy to see that the Bianchi property is fulfilled, i.e., if we interchange the seed solutions 
$\psi_1$ by $\psi_2$ and vice versa, we will obtain the same final potential $V_2$.

This iterative process can be continued at will. The third step
\begin{equation}
H_3 \bigg(-\bigtriangleup +f_3(n)\bigg)=\bigg(-\bigtriangleup +f_3(n+2)\bigg)  H_2 ,
\end{equation}
produces similar results 
 \begin{align}
\label{50} & V_3(n)=V_2(n+1)-2 \bigtriangleup f_3(n+1),\\
\label{51} & -\bigtriangleup^2 f_3(n)+\bigtriangleup V_2(n)-2f_3(n)\bigtriangleup f_3(n+1)=\\
& -f_3(n)V_2(n+1)+f_3(n+2)V_2(n).\nonumber
\end{align}
Similarly to expression (\ref{41}), it is easy to see that if $\psi_i$ satisfies equation (\ref{12}) for 
$\epsilon_i$, then the sequence
\begin{equation}
\label{x_3}
\tilde{\tilde{\psi}}_i(n)=\big(-\bigtriangleup +f_2(n)\big)\big(-\bigtriangleup +f_1(n)\big)\psi_i(n)
\end{equation}
$$
=\dfrac{\left|
\begin{array}{ccc}
\psi_1(n) & \psi_2(n) & \psi_i(n)\\
 \psi_1(n+1) &  \psi_2(n+1) &  \psi_i(n+1)\\
 \psi_1(n+2) &  \psi_2(n+2) &  \psi_i(n+2)\\
\end{array}\right| }
{\left|
\begin{array}{cc}
\psi_1(n) & \psi_2(n)\\
\psi_1(n+1) &  \psi_2(n+1)
\end{array}\right| }=\dfrac{C(\psi_1,\psi_2, \psi_i)(n)}{C(\psi_1,\psi_2)(n)}
$$
satisfies the new equation
\begin{equation}
\quad \left( -\bigtriangleup^2 +V_2(n)\right )\tilde{\tilde{\psi}}_i(n) =\epsilon_i \tilde{\tilde{\psi}}_i(n+2).
\end{equation}
Moreover, if we choose $\psi_3$ as a solution of equation (\ref{12}) then from expression (\ref{x_3}) we 
obtain
\begin{equation}
f_3(n)=\dfrac{\bigtriangleup \tilde{\tilde{\psi}}_3(n)}{\tilde{\tilde{\psi}}_3(n)}
=\dfrac{\bigtriangleup \left[ \big(-\bigtriangleup +f_2(n)\big)\big(-\bigtriangleup +f_1(n)\big)\psi_3(n)\right]}{\big(-\bigtriangleup +f_2(n)\big) \big(-\bigtriangleup +f_1(n)\big)\psi_3(n)}
\end{equation}
$$
=\dfrac{C(\psi_1, \psi_2)(n)}{C(\psi_1,\psi_2)(n+1)}\dfrac{C(\psi_1, \psi_2, \psi_3)(n+1)}{C(\psi_1, \psi_2, \psi_3)(n)}-1,
$$
and 
\begin{equation}
V_3(n)=V_0(n+3)-2\bigtriangleup \left( f_1(n+3)+f_2(n+2)+f_3(n+1)\right)
\end{equation}
$$
=V_0(n+3)-2\bigtriangleup 
\dfrac{\left|
\begin{array}{ccc}
\psi_1(n+1) & \psi_2(n+1) & \psi_3(n+1)\\
 \psi_1(n+2) &  \psi_2(n+2) &  \psi_3(n+2)\\
 \psi_1(n+4) &  \psi_2(n+4) &  \psi_3(n+4)\\
\end{array}\right| }
{C(\psi_1, \psi_2, \psi_3)(n+1)}.
$$

The previous results allow to formulate next a theorem for the $k$th iteration in a compact 
form, since we have
\begin{equation}
H_i \big( A_i^{(n)}\big)^{+}=
\big( A_i^{(n+2)}\big)^{+}  H_{i-1} ,\quad i=1,2,\dots, k,
\end{equation}
which leads to the following higher-order intertwining relationships
\begin{equation}
\label{rel}
H_i \big( A_i^{(n)}\big)^{+}\big( A_{i-1}^{(n)}\big)^{+}\dots \big( A_1^{(n)}\big)^{+}=
\big( A_i^{(n+2)}\big)^{+}\big( A_{i-1}^{(n+2)}\big)^{+}\dots \big( A_1^{(n+2)}\big)^{+}  H_{0} .
\end{equation}

\begin{theorem}
If the seed solutions $\psi_1(n), \psi_2(n),\dots, \psi_k(n)$ satisfy the initial equation (\ref{12}) for 
different constants $\epsilon_1, \epsilon_2, \dots, \epsilon_k$, then the functions
\begin{equation}
\hat{ \psi}_i(n)=\prod _{j=1}^{i-1}\left(-\bigtriangleup +f_j(n) \right)\psi_i(n)
=(-1)^{i-1}\dfrac{C(\psi_1, \dots, \psi_i)(n)}{C(\psi_1, \dots, \psi_{i-1})(n)}, 
\end{equation} 
$i=1,2,\dots,k,$, satisfy the equations
\begin{equation}
\quad \left( -\bigtriangleup^2 +V_{i-1}(n)\right )\hat{ x}_i(n) =\epsilon_i \hat{ x}_i(n+2),
\end{equation}
where
\begin{equation}
f_j(n)=\dfrac{C(\psi_1, \dots, \psi_{j-1})(n)}{C(\psi_1, \dots, \psi_{j-1})(n+1)}
\dfrac{  C(\psi_1, \dots, \psi_{j})(n)}{C(\psi_1, \dots, \psi_{j})(n+1)}
-1,
\end{equation}
\begin{equation}
V_i(n)=V_0(n+i)-2\bigtriangleup \left( f_1(n+i)+f_2(n+i-1)+\dots +f_i(n+1)\right)
\end{equation}
$$
=V_0(n+i)-2\bigtriangleup 
\dfrac{\left|
\begin{array}{cccc}
\psi_1(n+1) & \psi_2(n+1) & \dots & \psi_i(n+1)\\
 \psi_1(n+2) &  \psi_2(n+2) &  \dots & \psi_i(n+2)\\
 \dots& \dots & \dots & \dots\\
  \psi_1(n+i) &  \psi_2(n+i) &  \dots & \psi_i(n+i)
\end{array}\right| }
{C(\psi_1, \psi_2,\dots, \psi_i)(n+1)}.
$$
\end{theorem}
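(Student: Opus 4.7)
The plan is to prove the theorem by induction on $i$, exploiting the chain of intertwining relations \eqref{rel} together with the single-step formulas \eqref{10} and \eqref{42} that have already been established. The base cases $i=1,2,3$ have been worked out explicitly in the preceding paragraphs (formulas for $\tilde{\psi}_i$, $\tilde{\tilde{\psi}}_i$, $V_1, V_2, V_3$ and for $f_1, f_2, f_3$), and the passage from step $i-1$ to step $i$ is structurally identical at every level because each pair $(H_{i-1}, H_i)$ is linked by a first-order intertwiner of exactly the same form as the pair $(H_0, H_1)$ studied in Theorem 1.

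More concretely, the inductive step would go as follows. Assume that for some $i\ge 2$ we already know that $\hat\psi_j(n)$, $j=1,\dots,i-1$, satisfies the equation $(-\bigtriangleup^2+V_{j-1}(n))\hat\psi_j(n)=\epsilon_j\hat\psi_j(n+2)$, together with the Casortian formulas for $f_{j-1}$ and $V_{j-1}$. Define
\begin{equation}
\hat\psi_i(n) := \bigl(-\bigtriangleup+f_{i-1}(n)\bigr)\,\hat\psi_i^{(i-2)}(n),
\end{equation}
where $\hat\psi_i^{(i-2)}(n)=\prod_{j=1}^{i-2}(-\bigtriangleup+f_j(n))\psi_i(n)$. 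Applying the single-step argument used in \eqref{41}--\eqref{42} to the operator $(-\bigtriangleup+f_{i-1}(n))$ intertwining $H_{i-2}$ and $H_{i-1}$, and to the seed $\hat\psi_i^{(i-2)}$ (which satisfies the equation for $V_{i-2}$ with eigenvalue $\epsilon_i$ by the induction hypothesis), yields immediately that $\hat\psi_i(n)$ satisfies the stated equation at level $V_{i-1}$. Since $f_{i-1}$ is defined through $\hat\psi_{i-1}$ via $f_{i-1}(n)=\bigtriangleup\hat\psi_{i-1}(n)/\hat\psi_{i-1}(n)$, the iteration of Theorem 1 then gives $V_{i-1}(n)=V_{i-2}(n+1)-2\bigtriangleup f_{i-1}(n+1)$, and telescoping this recursion back to $V_0$ produces the claimed additive formula $V_i(n)=V_0(n+i)-2\bigtriangleup(f_1(n+i)+\dots+f_i(n+1))$.

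The main obstacle, and really the heart of the argument, will be converting the nested operator action $\prod_{j=1}^{i-1}(-\bigtriangleup+f_j(n))\psi_i(n)$ into the compact Casortian ratio $(-1)^{i-1}C(\psi_1,\dots,\psi_i)(n)/C(\psi_1,\dots,\psi_{i-1})(n)$, and similarly rewriting the numerator in the determinantal form for $V_i$. This is a purely algebraic manipulation that rests on a Jacobi--Sylvester type identity for Casortians (the discrete analogue of the Wronskian identity used in Crum's classical theorem), and it has already been verified by hand for $i=2$ (see \eqref{42}) and $i=3$ (see \eqref{x_3}). The inductive step amounts to applying the same Jacobi identity to expand $C(\psi_1,\dots,\psi_{i-1},\psi_i)(n)$ along its last column in terms of $(i-1)\times(i-1)$ minors, each of which is a Casortian of the lower level, and then recognising the result as $\hat\psi_{i-1}^{(i-2)}\cdot(-\bigtriangleup+f_{i-1}(n))\hat\psi_i^{(i-2)}(n)$ up to a sign.

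Finally, the independence of $\hat\psi_i$ from the order of the seeds (Bianchi's permutability, already verified for $V_2$) is a direct corollary of the Casortian expression, since interchanging two of the seeds $\psi_a,\psi_b$ only multiplies $C(\psi_1,\dots,\psi_i)$ by $-1$, which cancels in the ratios defining $\hat\psi_i$, $f_i$ and in the $\bigtriangleup$-derivative defining $V_i$. Thus the whole theorem reduces to (i) the one-step intertwining result of Theorem 1, applied iteratively along the chain \eqref{rel}, and (ii) a clean Casortian-identity bookkeeping that is a discrete transcription of the continuous Crum computation sketched at the end of Section 2.
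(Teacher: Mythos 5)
Your proposal follows essentially the same route as the paper's own proof, which simply invokes the explicit computations already done for the first three iterations together with induction along the chain of intertwining relations (\ref{rel}); your inductive step via the single-step argument of (\ref{41})--(\ref{42}) and the telescoping of $V_{i}(n)=V_{i-1}(n+1)-2\bigtriangleup f_i(n+1)$ is exactly that strategy. In fact you supply more detail than the paper does, in particular by naming the Jacobi--Sylvester (Desnanot--Jacobi) identity for Casortians as the algebraic engine behind the determinantal formulas, which the paper leaves implicit.
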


\begin{proof}
We have given the proof previously for $n =1,2,3$. For other values of $n$, it is based on a proof by 
induction and the  observation contained in equation (\ref{rel}).
\end{proof}

\begin{example}
{\bf Free particle: discrete second order Darboux transformation.} Let us apply the previous treatment 
to the free particle, for which $V_0(n)=0$. In order to avoid unnecessarily long formulas in our final 
results, we fix the two seed solutions of the initial Schr\"odinger problem as follows
\begin{eqnarray}
&& \psi_1(n) = \frac{1}{n+1}, \\
&& \psi_2(n) = \frac{2^n(1+3^n)}{3^n},
\end{eqnarray} 
which are associated to $\epsilon_1=0, \ \epsilon_2 = -1/4$ respectively (see Example 1 with 
$\kappa=1/2$). We use equation (\ref{87}) for calculating the new potential; we obtain:
\begin{eqnarray}
&& V_2(n) = 2\frac{3^{2n+5} + 4(2n^2 + 14n + 27)3^{n+1} + 1}{[n+5-(n+1)3^{n+2}] [n+6 -(n+2)3^{n+3}]}.
\end{eqnarray} 
By selecting then a third initial solution, for example
\begin{equation}
\psi_3(n)=C_1+C_2n,
\end{equation}
we find that the transformed function 
\begin{equation}
\hat{ \psi}_3(n)=\left(-\bigtriangleup +2\dfrac{n+1}{n+2}\dfrac{(3^{-1}-3^{n+1})n+\frac{5}{3}-3^{n+1}}{(1+3^{n+1})n+4} \right)
\end{equation}
$$
\left(-\bigtriangleup +\dfrac{1}{n+1} \right)\left(C_1+C_2n\right)
$$
 is a solution of equation
\begin{equation}
\left(-\bigtriangleup^2+V_2(n)\right) \hat{ \psi}_3(n)=0.
\end{equation}
\end{example}

\bibliographystyle{plain}

\end{document}